\documentclass{amsart}

\usepackage{pgf,tikz,pgfplots}
\pgfplotsset{compat=1.15}
\usepackage{mathrsfs}
\usetikzlibrary{arrows}

\usepackage[T1]{fontenc}
\usepackage{microtype}
\usepackage{lmodern}
\usepackage[colorlinks=true,urlcolor=blue, citecolor=red,linkcolor=blue,linktocpage,pdfpagelabels, bookmarksnumbered,bookmarksopen]{hyperref}
\usepackage[hyperpageref]{backref}
\usepackage{amsthm} 
\usepackage{latexsym,amsmath,amssymb}

\usepackage{accents}
\usepackage{esint}

\usepackage{soul}
\usepackage{mathtools} 
\usepackage{xparse} 
\usepackage[capitalize]{cleveref}

\usepackage[shortlabels]{enumitem}

\usepackage[nomargin,inline,marginclue,draft]{fixme}
\FXRegisterAuthor{n}{nlang}{\color{red} Nicole}
\fxusetheme{color}
\usepackage{layout} 
\usepackage[top=3cm, bottom=3cm, left=2cm, right=2cm]{geometry} 

\usepackage[textsize=small]{todonotes}
\title[$n$-Laplace systems with antisymmetric potentials]{Non-Regularizing properties of ${n}$-Laplace systems with antisymmetric potentials in critical Lebesgue spaces}
\author{Dorian Martino}
 \address[Dorian Martino]{ETH Zürich, Department of Mathematics, Rämistrasse 101, 8092 Zürich, Switzerland}
 \email{dorian.martino@math.ethz.ch}
\author{Armin Schikorra}
 \address[Armin Schikorra]{Department of Mathematics,
 University of Pittsburgh,
 301 Thackeray Hall,
 Pittsburgh, PA 15260, USA}
 \email{armin@pitt.edu}

\newcommand{\B}{{\mathbb{B}}}

\renewcommand{\S}{{\mathbb S}}

\newtheorem{theorem}{Theorem}
\newtheorem{lemma}[theorem]{Lemma}

\newtheorem{proposition}[theorem]{Proposition}

\theoremstyle{definition}

\theoremstyle{remark}

\newcommand\curl{{\rm curl\,}}

\newcommand{\di}{\mathrm{div}}

\renewcommand{\so}{\mathfrak{so}}

\newcommand{\R}{\mathbb{R}}

\newcommand{\Nr}{\mathcal{N}}

\newcommand{\brac}[1]{\left (#1 \right )}
\newcommand{\abs}[1]{\left\lvert #1 \right \rvert}

\newcommand{\norm}[1]{\left\|{#1}\right\|}

\newcommand{\barint}{
\rule[.036in]{.12in}{.009in}\kern-.16in \displaystyle\int }

\newcommand{\barcal}{\text{$ \rule[.036in]{.11in}{.007in}\kern-.128in\int $}}

\def\mvint_#1{\mathchoice
          {\mathop{\vrule width 6pt height 3 pt depth -2.5pt
                  \kern -8pt \intop}\nolimits_{\kern -3pt #1}}%
          {\mathop{\vrule width 5pt height 3 pt depth -2.6pt
                  \kern -6pt \intop}\nolimits_{#1}}%
          {\mathop{\vrule width 5pt height 3 pt depth -2.6pt
                  \kern -6pt \intop}\nolimits_{#1}}%
          {\mathop{\vrule width 5pt height 3 pt depth -2.6pt
                  \kern -6pt \intop}\nolimits_{#1}}}

\numberwithin{theorem}{section} \numberwithin{equation}{section}

\newcommand{\lap}{\Delta }
\newcommand{\aleq}{\lesssim}

\newcommand{\aeq}{\approx}

\newcommand{\dd}{\mathrm{d}}

\usepackage{scalerel}[2014/03/10]
\usepackage[usestackEOL]{stackengine}
\def\avint{\,\ThisStyle{\ensurestackMath{%
			\stackinset{c}{.2\LMpt}{c}{.5\LMpt}{\SavedStyle-}{\SavedStyle\phantom{\int}}}%
		\setbox0=\hbox{$\SavedStyle\int\,$}\kern-\wd0}\int}

\renewcommand{\div}{\operatorname{div}}

\let\latexchi\chi
\makeatletter
\renewcommand\chi{\@ifnextchar_\sub@chi\latexchi}
\newcommand{\sub@chi}[2]{
  \@ifnextchar^{\subsup@chi{#2}}{\latexchi^{}_{#2}}%
}
\newcommand{\subsup@chi}[3]{
  \latexchi_{#1}^{#3}%
}
\makeatother
\newcommand{\eps}{\varepsilon}

\date{\today}

\begin{document}

\begin{abstract}
For every $n\geq 3$ we construct a bounded discontinuous map $u\in W^{1,n}(\B^n,\R^{n+2})$ solving an $n$-Laplace system with an antisymmetric potential $\Omega\in L^n$. This shows that a recent regularity result on $n$-Laplace systems with antisymmetric potentials in Lorentz spaces by the authors \cite{MartinoSchikorra} is sharp in the sense that we cannot move from Lorentz spaces to classical Lebesgue spaces.
This gives in particular a negative answer to Rivi\`ere's question in \cite[p.~107, equation~(III.23)]{RiviereSurvey}. 

\end{abstract}

\maketitle
\tableofcontents

\section{Introduction}
Let $\Nr\subset \R^N$ be a smooth closed submanifold and let $\B^n\subset \R^n$ be the unit ball centred at the origin. A critical point $u\in W^{1,n}(\B^n,\R^N)$ of the following $n$-energy is called an $n$-harmonic map
\[
 \forall u\in W^{1,n}(\B^n,\Nr),\qquad E(u) \coloneqq \int_{\B^n} |\nabla u|^{n}.
\]
Rivière \cite{R07} proved that the Euler--Lagrange system can be written in the form
\begin{equation}\label{eq:system}
 \forall i\in\{1,\ldots,N\},\qquad -\di\bigl(\abs{\nabla u}^{n-2}\nabla u^i\bigr) =\sum_{j=1}^N \Omega_{ij}\cdot \abs{\nabla u}^{n-2}\nabla u^j,
\end{equation}
where $\Omega_{ij} \in L^n(\B^n,\R^n)$ depends on $u$ and $\Nr$ and is antisymmetric $\Omega_{ij}=-\Omega_{ji}$, owing to the fact the second fundamental form is perpendicular to the tangent space. For harmonic maps (i.e.\ the case $n=2$), Rivière proved that, even if $\Omega$ does not depend on $u$, such solutions are continuous: the algebraic property of antisymmetry of $\Omega$ leads to compensation $\di$-$\curl$ products, via Uhlenbeck's gauge theory \cite{U82}, and thus to regularity.\\

For $n \geq 3$ the situation changes drastically. Regularity for critical $n$-harmonic maps into manifolds $\Nr$ is only known under some additional assumption on geometric properties of $\Nr$ (such that $\Nr$ being a round sphere), on $u$ (such as minimizing, or arising from a min-max scheme) or some additional regularity assumption on $u$ \cite{DL2026,FR2013,F93,GX2019,MMS2026,MartinoSchikorra,MPS22,MY96,SS17,S94,takeuchi1994,TW1995,U1977}.
In \cite[p.~107, Eq.~(III.23)]{RiviereSurvey} Rivi\`ere asked whether every $W^{1,n}$ solution of \eqref{eq:system} is continuous under the sole assumptions that $\Omega$ is antisymmetric and belongs to $L^n$.
Towards this, the following was proven \cite[Theorem~1.2]{MartinoSchikorra}.
\begin{theorem}\label{thm:MS}
Let $n\geq3$, let $\B^n\subset\R^n$ be the unit ball centred at the origin, and let $u\in W^{1,n}(\B^n,\R^N)$ be a solution to \eqref{eq:system}.  Assume that 
\begin{equation}\label{eq:MS-Lorentz}
 \Omega_{ij}=-\Omega_{ji}, \qquad \text{ and }\qquad \Omega\in L^{(n,2)}\bigl(\B^n,\so(N)\otimes\R^n\bigr),
\end{equation}
and, with $R_\alpha=\partial_\alpha(-\Delta)^{-1/2}$,
\begin{equation}\label{eq:MS-curl}
 \max_{1\leq i,j\leq N} \max_{1\leq\alpha,\beta\leq n} \norm{R_\alpha\Omega_{ij}^{\beta}-R_\beta\Omega_{ij}^{\alpha}}_{L^{(n,1)}(\B^n)} <\infty.
\end{equation}
Then $u$ is continuous in $\B^n$.
\end{theorem}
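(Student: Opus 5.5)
Since \Cref{thm:MS} is quoted from \cite{MartinoSchikorra}, I only sketch how I would prove it. The strategy is to run a Rivière–Uhlenbeck gauge argument adapted to the $n$-Laplacian, with Lorentz-space refinements that make the endpoint estimates close.

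\textbf{Step 1: Localize and build a gauge.} By absolute continuity of the norms, I would first localize to a small ball on which $\|\Omega\|_{L^{(n,2)}}$ and the curl-type quantity in \eqref{eq:MS-curl} are both small. The key structural input is a Coulomb/Uhlenbeck gauge: a map $P\in W^{1,(n,2)}(B,SO(N))$ such that
\[
\Omega^P := P^{-1}\nabla P + P^{-1}\Omega P
\]
is divergence-free, with $\|\nabla P\|_{L^{(n,2)}}\lesssim \|\Omega\|_{L^{(n,2)}}$. The natural route is Uhlenbeck's continuity method in the Lorentz scale, which relies on the embedding $W^{1,(n,2)}\hookrightarrow C^0$ in the borderline dimension.

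The genuinely $n$-dimensional problem is that for $n\ge 3$ the antisymmetric part of $\Omega^P$ is not automatically controlled in a space good enough for Wente-type estimates. This is exactly what hypothesis \eqref{eq:MS-curl} is for. The components $R_\alpha\Omega^\beta-R_\beta\Omega^\alpha$ control the "curl part" of $\Omega$. Combined with the gauge, this should give that $\Omega^P=\nabla^\perp$-type potentials (co-closed $(n-2)$-forms $\xi$ with $d^*\xi=\Omega^P$) satisfy $\nabla\xi\in L^{(n,1)}$, or something close to it. To obtain this I would use Hodge decomposition of $\Omega$ and the commutator/product structure of $P^{-1}\Omega P$.

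\textbf{Step 2: Conservation law and Lorentz–Hölder bookkeeping.} Rewrite the system using $w=P^{-1}$:
\[
\div\bigl(w\,|\nabla u|^{n-2}\nabla u\bigr)=\Omega^P\cdot w\,|\nabla u|^{n-2}\nabla u,
\]
whose right-hand side has div-free $\Omega^P$ times $|\nabla u|^{n-2}\nabla u\in L^{n/(n-1)}$. Using $\nabla \xi\in L^{(n,1)}$ and $|\nabla u|^{n-1}\in L^{(n/(n-1),\infty)}$, the right-hand side pairs as $L^{(n,1)}\cdot L^{(n',\infty)}\subset L^1$. This yields a decay estimate: test against $\varphi(u-\bar u)$ on balls $B_r$ and apply Hodge decomposition of $w|\nabla u|^{n-2}\nabla u$ on $B_r$. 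The goal is a Morrey decay
\[
\int_{B_{\theta r}}|\nabla u|^n\le \tfrac12\int_{B_r}|\nabla u|^n.
\]
The nonlinear $p$-Laplacian structure requires comparing with $n$-harmonic replacements, using monotonicity of $a\mapsto|a|^{n-2}a$. Iterating the decay gives $u\in C^{0,\alpha}$ via Morrey's Dirichlet growth theorem.

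\textbf{Main obstacle.} I expect the hardest step to be Step 1's improvement from $L^{(n,2)}$ to an $L^{(n,1)}$-type control of the gauged connection. The Lorentz index $1$ is what is needed to make the endpoint duality with the merely weak-$L^{n'}$ quantity $|\nabla u|^{n-1}$ close. Without \eqref{eq:MS-curl}, the divergence-free gauge gives only second index $2$, and the counterexample announced in the abstract shows this loss is fatal. I would try to extract $L^{(n,1)}$ from the curl hypothesis via $L^{(n,1)}$-boundedness of Riesz transforms and Coifman–Rochberg–Weiss commutator estimates $[P,R_\alpha]$, which gain integrability because $\nabla P\in L^{(n,2)}$.
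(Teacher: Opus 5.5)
\textbf{Context and side errors.} The paper does not prove this theorem. It is quoted from \cite{MartinoSchikorra}, and the only indication given is that the proof combines Uhlenbeck's gauge with the sharp nonlinear potential estimates of Kuusi--Mingione \cite{KM18} for the vectorial $n$-Laplacian. Your Step~1 fits the first ingredient: a Coulomb gauge with $\div\Omega^P=0$, and an upgrade of $\Omega^P$ to $L^{(n,1)}$ from \eqref{eq:MS-curl} via commutator estimates. Several statements there are nonetheless wrong.
\begin{itemize}
\item The embedding $W^{1,(n,2)}\hookrightarrow C^0$ is false; only $W^{1,(n,1)}\hookrightarrow C^0$ holds. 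By \Cref{lem:Lorentz-profile}, $\phi(x)=(\log(e/\abs{x}))^{1-a}$ with $\tfrac12<a<1$ has $\abs{\nabla\phi}=(1-a)F_a\in L^{(n,2)}$ and is unbounded. So the gauge cannot rely on continuity of $P$. It does not need to: $P\in SO(N)$ is bounded anyway, and the standard constructions work without continuity.
\item A divergence-free $\R^n$-valued field is $d^*\xi$ for a $2$-form $\xi$, not an $(n-2)$-form.
\item The counterexamples of this paper do not show that \eqref{eq:MS-curl} is indispensable at the level $L^{(n,2)}$. In \Cref{thm:Ln-counterexample} one has $\Omega\notin L^{(n,2)}$, and \Cref{thm:gradient-counterexample} has $\Omega=\nabla\Xi$, so the curl condition holds trivially there. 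What they show is that the Lorentz assumption cannot be relaxed.
\end{itemize}

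\textbf{The genuine gap is Step~2.} It contains no mechanism that turns the $L^{(n,1)}$ information into decay.

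The pairing $L^{(n,1)}\cdot L^{(n',\infty)}\subset L^1$ only says that the right-hand side is integrable. That already holds for every $\Omega\in L^n$, including the examples of \Cref{thm:Ln-counterexample} and \Cref{thm:gradient-counterexample}. There exactly this integrability is used to pass to the distributional equation, and $u$ is discontinuous. Integrability of $\mu$ alone gives no continuity for $-\div(\abs{\nabla u}^{n-2}\nabla u)=\mu$. Radial data $\mu=\abs{x}^{-n}\log^{-b}(e/\abs{x})$ with $2-\frac1n<b\leq n$ lie in $L^1$ and produce unbounded radial solutions in $W^{1,n}$. Continuity is governed by the Wolff potential $\int_0^R\abs{\mu}(B_\rho(x))^{\frac1{n-1}}\frac{\dd\rho}{\rho}$, which needs quantitative smallness of $\abs{\mu}(B_\rho)$ across scales.

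The tools you list do not supply this either:
\begin{itemize}
\item Since $u$ is only in $W^{1,n}$, the test function $\varphi(u-\bar u)$ is unbounded. An $L^1$ bound on the right-hand side therefore does not control its pairing with $\varphi(u-\bar u)$.
\item Monotonicity of $a\mapsto\abs{a}^{n-2}a$ and the $n$-harmonic replacement apply to the original equation. There the error $\int\Omega\cdot\abs{\nabla u}^{n-2}\nabla u\,(u-h)$ carries only the ungauged $\Omega\in L^{(n,2)}$.
\item The gauged equation, where the good $\Omega^P$ lives, has principal part $\div(P^{-1}\abs{\nabla u}^{n-2}\nabla u)$ with a discontinuous rotation-valued coefficient. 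Neither monotonicity nor comparison with $n$-harmonic maps is available for it, and you do not say how to pass between the two equations.
\item Hodge-decomposing $P^{-1}\abs{\nabla u}^{n-2}\nabla u$ yields a co-exact part governed by $d(\abs{\nabla u}^{n-2}\nabla u)$. For $n\geq3$ this is uncontrolled, because the flux is not a gradient.
\end{itemize}
This missing div--curl compensation is exactly what separates $n\geq3$ from Rivi\`ere's case $n=2$. The step you treat as routine is therefore where the real difficulty lies. Per the paper's description, \cite{MartinoSchikorra} handles it with the Kuusi--Mingione potential estimates rather than a Morrey-decay argument.
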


\Cref{thm:MS} followed a series of partial results that assumed higher regularity differentiability on $\Omega$, in particular \cite{MPS22,SS17}. It merges Uhlenbeck's gauge theoretic arguments with the sharp potential estimates for the vectorial $p$-Lapalcian by Kuusi-Mingione \cite{KM18}, and thus one can wonder how sharp it is. Given the progress in Artificial Intelligence, we do not even have to work that hard to show that one cannot hope to remove the Lorentz space assumption $\Omega \in L^{(n,2)}(\B^n,\so(N)\otimes \R^n)$, a simple prompt obtains the following:

\begin{theorem}\label{thm:Ln-counterexample}
For every $n\geq3$, with $N=n+2$, there are explicit
\begin{equation}\label{eq:thm1-spaces}
\begin{aligned}
 u&\in W^{1,n}(\B^n,\R^N)\cap L^\infty(\B^n,\R^N),\\
 \Omega&\in L^n\bigl(\B^n,\so(N)\otimes\R^n\bigr)
\end{aligned}
\end{equation}
such that \eqref{eq:system} holds in $\mathcal D'(\B^n)$ and $u$ is discontinuous at the origin.  Moreover, we have
\begin{equation}\label{eq:thm1-Omega-size}
 \abs{\nabla u(x)} \aeq \abs{\Omega(x)}\simeq_n
 \frac{1}{\abs{x}\sqrt{\log(e/\abs{x})}},
\end{equation}
and consequently, for every finite $q>0$,
\begin{equation}\label{eq:thm1-full-Lorentz-range}
 \nabla u,\, \Omega\in L^{(n,q)}(\B^n)
 \quad\Longleftrightarrow\quad
 q>2.
\end{equation}
\end{theorem}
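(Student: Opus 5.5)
The plan is to construct $u$ as a map into the round sphere $\S^{n+1}\subset\R^{n+2}$ that winds infinitely often near the origin at a logarithmically slow rate. The antisymmetric potential $\Omega$ is then produced pointwise by linear algebra, and its size is controlled by the sphere identity. Write $x=r\omega$ and $\ell(r)\coloneqq\log(e/r)$. Set
\[
 u(x)=\Bigl(a(r)\,\omega,\ \sqrt{1-a(r)^2}\cos\theta(r),\ \sqrt{1-a(r)^2}\sin\theta(r)\Bigr),\qquad a(r)=c\,\ell(r)^{-1/2},\quad \theta(r)=\ell(r)^{1/2},
\]
with $c\in(0,1)$ a fixed constant (any $c\in(0,1)$ works, since $\ell\geq 1$), so that $\abs{u}\equiv1$. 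The angular derivative of $a\omega$ has size $a/r$, and $\abs{\theta'}\aeq 1/(r\ell^{1/2})$. Hence $\abs{\nabla u}\aeq 1/(r\sqrt{\ell})$, which gives the first half of \eqref{eq:thm1-Omega-size}. The last two components rotate by the unbounded angle $\theta(r)\to\infty$, so $u$ has no limit at $0$, while $u\in L^\infty$. The Lorentz statement \eqref{eq:thm1-full-Lorentz-range} is then a one-dimensional computation: the decreasing rearrangement of $1/(r\sqrt{\ell(r)})$ is comparable to $t^{-1/n}\log(e/t)^{-1/2}$. Then $\int_0 \bigl(t^{1/n}f^*(t)\bigr)^q\,dt/t=\int_0\log(e/t)^{-q/2}\,dt/t$, which is finite iff $q>2$. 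In particular $q=n\geq3$ gives $\nabla u\in L^n$.

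To produce $\Omega$, set $w^j\coloneqq\abs{\nabla u}^{n-2}\nabla u^j\in\R^n$ and $f^i\coloneqq-\div(w^i)$, computed classically on $\B^n\setminus\{0\}$. Since $\abs{u}=1$, we have $\sum_j u^j\nabla u^j=0$, and therefore $f\cdot u=\abs{\nabla u}^n$. Decompose $f=\abs{\nabla u}^n u+f_T$ with $f_T\perp u$. The normal part is absorbed by
\[
 \Omega^{(1)}_{ij}=\abs{\nabla u}^n\,\frac{u^iw^j-u^jw^i}{\abs{w}^2}.
\]
Indeed $\sum_j\Omega^{(1)}_{ij}\cdot w^j=\abs{\nabla u}^nu^i$, using $\sum_j u^jw^j=0$. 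Moreover $\abs{\Omega^{(1)}}\aeq\abs{\nabla u}^n/\abs{w}=\abs{\nabla u}$, which gives the lower bound in \eqref{eq:thm1-Omega-size}. For the tangential part, note that the linear map $\Omega\mapsto\bigl(\sum_j\Omega_{ij}\cdot w^j\bigr)_i$ from $\so(N)\otimes\R^n$ to $\R^N$ is surjective whenever the $N\times n$ matrix $(w^j)$ has rank at least $2$. To see this, observe that the adjoint vanishes at $y$ only if $y\otimes w=w\otimes y$, which forces rank one. Here $\nabla u$ has rank $n$, with its two relevant singular values comparable to $\abs{\nabla u}$. So the minimal-norm solution $\Omega^{(2)}$ of $\sum_j\Omega^{(2)}_{ij}\cdot w^j=f_T^i$ satisfies $\abs{\Omega^{(2)}}\aleq\abs{f_T}/\abs{w}$. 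We then take $\Omega=\Omega^{(1)}+\Omega^{(2)}$.

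The main obstacle is the estimate $\abs{f_T}\aleq\abs{\nabla u}^{n}$. A priori, differentiating $w$ produces terms of size $\abs{\nabla u}^{n-1}/r$, which are far larger than $\abs{\nabla u}^{n}$ near $0$. The point to check is that all such terms are normal, i.e.\ parallel to $u$. This is the same cancellation as in the identity $-\div(\abs{\nabla\omega}^{n-2}\nabla\omega)=\abs{\nabla\omega}^n\omega$ for $\omega=x/\abs{x}$. Every remaining term involves a radial derivative of $a$, $\theta$, or $\abs{\nabla u}$, each of which gains a factor $\ell^{-1/2}$ or $\ell^{-1}$ relative to $1/r$. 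I would verify this by writing $\abs{\nabla u}^2=(n-1)a^2/r^2+\abs{\partial_r u}^2$, and computing $f$ separately in the $\omega$-block and the $(\cos\theta,\sin\theta)$-block. The expectation is $\abs{f_T}\aleq\abs{\nabla u}^{n-1}/(r\ell^{1/2})\aeq\abs{\nabla u}^n$, so that $\abs{\Omega}\aeq\abs{\nabla u}$. Finally, I would check that \eqref{eq:system} holds in $\mathcal D'(\B^n)$ across the origin. Since $w\in L^{n/(n-1)}$ and $\abs{w}\aleq\abs{x}^{-(n-1)}\ell^{-(n-1)/2}$, cutting off on $B_\eps$ gives a boundary term of size $\eps^{n-1}\sup_{\partial B_\eps}\abs{w}\to0$. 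Hence the point singularity carries no measure.
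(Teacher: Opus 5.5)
Your argument breaks at the step you flagged as the main obstacle, and the failure is not technical: for your $u$ there is no potential in $L^n$ at all. Write $t=\ell(r)$, $b=\sqrt{1-a^2}$, and $\abs{\nabla u}^{n-2}=r^{2-n}m(t)$. Then $\abs{\nabla u}^n=r^{-n}m\,M$ with $M=r^2\abs{\nabla u}^2\aeq t^{-1}$ and $m=M^{(n-2)/2}$. Using that $\abs{\nabla u}$ is radial, that $\omega\cdot\nabla\omega_k=0$, and that $\Delta\omega_k=-(n-1)\omega_k/r^2$, the $\omega$-block of your $f$ is exactly
\[
 -\div\brac{\abs{\nabla u}^{n-2}\nabla(a\,\omega_k)}=r^{-n}\Bigl((n-1)\,m\,a-\partial_t\bigl(m\,\partial_ta\bigr)\Bigr)\omega_k .
\]
For $a=c\,t^{-1/2}$ the first term is $(n-1)c\,r^{-n}m\,t^{-1/2}\aeq t^{1/2}\abs{\nabla u}^n$, while the second is $O(r^{-n}m\,t^{-5/2})$. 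This angular term contains no radial derivative, so it gains nothing. It is also not normal: $(\omega,0,0)$ has normal part $a\,u$ and a tangential part of length $b\approx1$. The identity for $x/\abs{x}$ does not transfer, because there $\omega$ is the whole map. Here it is a component of vanishing amplitude, and its tension points almost entirely along $T_u\S^{n+1}$. So $\abs{f_T}\gtrsim t^{1/2}\abs{\nabla u}^n$, not $\abs{f_T}\aleq\abs{\nabla u}^n$.

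This obstructs every potential, not only your minimal-norm one. Cauchy--Schwarz in $f^i=\sum_j\Omega_{ij}\cdot w^j$ gives $\abs{f}\le\abs{\Omega}\,\abs{w}$, and $\abs{w}=\abs{\nabla u}^{n-1}\aeq r^{1-n}m\,t^{-1/2}$. Hence any $\Omega$ solving the system for this $u$ satisfies $\abs{\Omega(x)}\gtrsim_{n,c}1/\abs{x}$ near $0$, so $\Omega\notin L^n(\B^n)$. The other parts of your proposal are fine: the Lorentz computation, the surjectivity of $\Omega\mapsto(\sum_j\Omega_{ij}\cdot w^j)_i$ for rank at least two, the bound on $\Omega^{(1)}$, and the removal of the origin.

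The remedy is to let the $\omega$-component decay like $t^{-1}$ instead of like the rotation speed $t^{-1/2}$. This is what the paper does: $g(t)=t^{-1}$ against $\partial_t\psi=t^{-1/2}$, with no sphere constraint ($\abs{u}^2=1+g^2$). Then the angular tension $(n-1)r^{-n}m\,g$ is of order $\abs{\nabla u}^n$.

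But then your singular-value argument is too weak. The $n-1$ angular singular values of $\nabla u$ become $\aeq1/(rt)$, while the radial one stays $\aeq1/(rt^{1/2})$. So on the orthogonal complement of its kernel, $\Omega\mapsto(\sum_j\Omega_{ij}\cdot w^j)_i$ is only bounded below by $\aeq\abs{\nabla u}^{n-2}/(rt)$. The worst direction is the phase direction, i.e.\ the top left singular vector of $w$. Dividing $\abs{f_T}$ by this bound only gives $\abs{\Omega^{(2)}}\aleq1/r$.

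What is needed is directional information. Because the rotation is slow, the forcing along the phase direction is only $O(r^{-n}m\,t^{-3/2})$, a factor $t^{-1/2}$ below $\abs{\nabla u}^n$. Only this component has to be paid for at the weak angular rate; everything else is absorbed through the strong radial gradient. This is exactly the bookkeeping of \Cref{lem:common-computation}. There $\kappa$ and $\delta$ pair with the radial gradient of $(\cos\psi,\sin\psi)$. The coefficient $\beta$ pairs with the weak gradient $g\nabla(x_k/\abs{x})$ and only has to absorb $\partial_t(\mu\,\partial_t\psi)/\mu=O(t^{-3/2})$. This yields $\beta=O(t^{-1/2})$ and hence $\abs{\Omega}\aeq1/(\abs{x}\sqrt{\log(e/\abs{x})})$.
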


We can ensure the curl assumption \eqref{eq:MS-curl} is trivially satisfied, but we need dimension $n \geq 4$.

\begin{theorem}\label{thm:gradient-counterexample}
For every $n\geq4$, with $N=n+2$, there are explicit
\begin{equation}\label{eq:thm2-spaces}
 u\in W^{1,n}(\B^n,\R^N)\cap L^\infty(\B^n,\R^N), \qquad \Xi\in W^{1,n}(\B^n,\so(N)),
\end{equation}
with $\Omega\coloneqq \nabla\Xi$ satisfying
\begin{equation}\label{eq:thm2-Omega-spaces}
 \Omega\in L^n\bigl(\B^n,\so(N)\otimes\R^n\bigr),
\end{equation}
such that \eqref{eq:system} holds in $\mathcal D'(\B^n)$, and $u$ is discontinuous at $0$.
\end{theorem}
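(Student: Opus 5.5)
The plan is to reuse the equivariant philosophy behind \Cref{thm:Ln-counterexample}, but with $\Omega$ built as an exact gradient from the start. I take $u$ of the form
\[
 u(x)=\Bigl(\alpha(r)\tfrac{x}{r},\ \beta(r)\cos\theta(r),\ \beta(r)\sin\theta(r)\Bigr)\in\R^n\times\R^2,\qquad r=\abs{x},
\]
with bounded profiles $\alpha,\beta$. The discontinuity at $0$ will come from $\theta(r)\to\infty$ slowly, for instance $\theta(r)=(\log\log(e^e/r))^{s}$ with a small $s>0$, and $\beta$ bounded away from zero near $0$. The radial part must satisfy $\alpha(r)\to0$ as $r\to0$, and fast enough that $\nabla(\alpha x/r)\in L^n$, i.e.\ $\int_0 \alpha^n\,dr/r<\infty$; a choice like $\alpha(r)=(\log(e/r))^{-\sigma}$ with $\sigma n>1$ does this. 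For $\Xi\in W^{1,n}(\B^n,\so(N))$ I use the same symmetry:
\[
 \Xi_{k,n+1}=f(r)\tfrac{x_k}{r},\quad \Xi_{k,n+2}=h(r)\tfrac{x_k}{r},\quad \Xi_{n+1,n+2}=g(r),\quad \Xi_{kl}=0\ (1\le k,l\le n),
\]
extended antisymmetrically. Then $\Omega=\nabla\Xi$ is automatically a gradient, so \eqref{eq:MS-curl} holds trivially. The membership $\Xi\in W^{1,n}$ reduces to $\int_0(\abs{f'}^n+\abs{h'}^n+\abs{g'}^n)r^{n-1}dr<\infty$ together with $\int_0(f^n+h^n)\,dr/r<\infty$.

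The key steps in order are as follows.
\begin{enumerate}[(i)]
\item Compute $\abs{\nabla u}^2=\alpha'^2+(n-1)\alpha^2/r^2+\beta'^2+\beta^2\theta'^2$, a function of $r$ only. Compute the left-hand side $-\div(\abs{\nabla u}^{n-2}\nabla u^i)$ in each component. The first $n$ components are of the form $A(r)x_k/r$, and the last two are combinations of $\cos\theta$ and $\sin\theta$ with radial coefficients.
\item Compute $\sum_j\Omega_{ij}\cdot\abs{\nabla u}^{n-2}\nabla u^j$ for the ansatz. The products $\nabla(f x_k/r)\cdot\nabla(\alpha x_l/r)$, summed appropriately, produce terms $(f'\alpha'+(n-1)f\alpha/r^2)$. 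These tangential pairings are what allow the ``normal'' force $\abs{\nabla u}^n u$-type terms to be balanced. This replaces the impossible planar situation, where the normal component of the equation would force $\abs{\nabla u}=0$.
\item Match coefficients. The system collapses to three or four scalar ODE/algebraic relations in $r$. These are linear in the unknowns $(f,h,g)$ and their derivatives, with coefficients depending on $(\alpha,\beta,\theta)$.
\item Solve these relations for $(f,h,g)$, possibly after rotating $(f,h)$ along $\theta$, i.e.\ writing $f=\rho(r)\cos\theta$ and $h=\rho(r)\sin\theta$ so that the coupling with $(\beta\cos\theta,\beta\sin\theta)$ diagonalizes. Then verify the asymptotics near $0$.
\item Finally, check that $u\in W^{1,n}\cap L^\infty$, and that the identity holds in $\mathcal D'(\B^n)$ across the origin. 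The latter follows from a cut-off argument, since $\abs{\nabla u}^{n-1}\in L^{n/(n-1)}$ and $\Omega\,\abs{\nabla u}^{n-2}\nabla u\in L^1$, and points have zero $n$-capacity.
\end{enumerate}

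The main obstacle is step (iv). The solved $\Xi$ must lie in $W^{1,n}$, which is strictly more than $\Omega\in L^n$. In particular the zero-order contribution $f\,\nabla(x_k/r)$ must be $n$-integrable, so $f$ itself has to decay like $\alpha$, while the equations tend to force $f\approx\abs{\nabla u}^{n}$-balancing quantities divided by $\alpha$-type terms.

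I would first try to solve so that $\rho\sim\alpha$ and $g'\sim\theta'$, and then tune the exponents $s,\sigma$ so that all the resulting integrals $\int_0 (\log 1/r)^{-\gamma}\,dr/r$ have $\gamma>1$. The dimension enters through the factor $(n-1)$ in the tangential energy and through the growth of $r^{n-1}$ weights. I expect precisely this balance to close only when $n\ge4$: for $n=3$ the required decay of $\rho$ and the forced size of $\rho$ coming from the equation seem to be incompatible.
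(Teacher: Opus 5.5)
\textbf{Verdict: there is a genuine gap.} Your ansatz is essentially the paper's; the paper fixes the rotating part at unit length. In its notation it reads $u=(\cos\psi(t),\sin\psi(t),g(t)\,x/\abs{x})$ with $t=\log(e/\abs{x})$, $\Xi_{12}=\zeta(t)$ and $\Xi_{i,k+2}=c_i(t)\,x_k/\abs{x}$. But the proposal is only a plan. Steps (i)--(iii) are announced rather than computed. Step (iv), producing profiles that solve the reduced system \emph{and} give $\Omega\in L^n$, is the entire content of the theorem and is left open, and the guidance you give for it would not work.

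Below I use the paper's notation, which clashes with yours: $f=\partial_t\psi$, $g$ is your radial amplitude, $\mu=\abs{x}^{n-2}\abs{\nabla u}^{n-2}$, and $\alpha,\beta$ are frame coefficients of $c$. The missing idea is to write $c=\alpha w+\beta v$ in the moving frame $w=(\cos\psi,\sin\psi)$, $v=Jw$, keeping \emph{both} components. Then $-\partial_t c=\gamma w+\delta v$ with $\gamma=-\partial_t\alpha+\beta f$ and $\delta=-\partial_t\beta-\alpha f$. With unit amplitude the three scalar equations then become triangular and purely algebraic:
\begin{itemize}
\item the radial equation forces $\delta=f^{-1}\bigl[(n-1)g-\partial_t(\mu\,\partial_tg)/\mu\bigr]$;
\item the $v$-component forces $\beta=\bigl((n-1)g\bigr)^{-1}\bigl[-\partial_t(\mu f)/\mu+\partial_tg\,\delta\bigr]$;
\item then $\alpha=-(\delta+\partial_t\beta)/f$ and $\gamma$ follow;
\item the $w$-component fixes $\kappa=\partial_t\zeta$.
\end{itemize}
No ODE has to be solved. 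Since $\zeta\approx\psi$ grows only like a power of $\log(e/\abs{x})$, $\Xi\in W^{1,n}$ is here equivalent to $\Omega\in L^n$. Your single-amplitude reduction ($\rho$ times the rotating unit vector) amounts to $\beta\equiv0$. Then, with unit amplitude, the radial and $v$-equations prescribe two incompatible values of $\alpha$. For $f=t^{-a}$, $g=t^{-b}$ these are roughly $-(n-1)t^{2a-b}$ versus $-\frac{a(n-1)}{b}t^{b}$, and the second is not even bounded.

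What then decides the matter is bookkeeping of these forced sizes. For $f=t^{-a}$, $g=t^{-b}$ one finds $\alpha=-(n-1)t^{2a-b}+\dots$ and $\beta=a\,t^{b-a-1}+\dots$. Since $\abs{\Omega}^2\geq 2(n-1)\abs{x}^{-2}(\alpha^2+\beta^2)$, $\Omega\in L^n$ requires $(b-2a)n>1$ and $(a+1-b)n>1$. Together with $an>1$ (needed for $\nabla u\in L^n$), this forces $\frac1n<a<1-\frac2n$, which is nonempty exactly when $n\geq4$. This is where the dimension restriction comes from. The paper's choice $a=\frac38$, $b=\frac{17}{16}$ makes both margins equal to $\frac5{16}$.

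Your heuristics fall outside this window:
\begin{itemize}
\item The guess that $\rho$ is comparable to the radial amplitude $g$ contradicts the forced size $\abs{c}\geq\abs{\alpha}\approx(n-1)g/f^2\gg g$.
\item The phase $(\log\log(e^e/r))^s$ has $f\approx s\,t^{-1}(\log t)^{s-1}$, essentially $a=1$. There, to leading order, $\abs{\alpha}\,\abs{\beta}\approx(n-1)/(tf)\to\infty$. Hence $\int_1^\infty(\alpha^2+\beta^2)^{n/2}\,\dd t=\infty$ and $\Omega\notin L^n$ for any power-like $g$.
\end{itemize}
The phase must instead grow like a power $t^{1-a}$ of $\log(e/\abs{x})$ with $a$ in the window above, e.g.\ $\psi=\frac85t^{5/8}$.
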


The above theorems give in particular a negative answer to Rivi\`ere's question in \cite{RiviereSurvey}, antisymmetry alone does not regularize the equation \eqref{eq:system}.

\subsection*{Usage of AI} 
Ideas and the first draft of this article was contributed by ChatGPT. Mathematical validation, final proof decisions, and final wording remain the sole responsibility of the human author, but no (deeper) ideas were provided by the authors.

\subsection*{Acknowledgement}
A.S. is funded by NSF Career DMS-2044898. D.M. is funded by Swiss National Science Foundation, project SNF 200020\textunderscore 219429.

\section{The main computation}

We will need the following result that can be found in the proof of Lemma 4.1 in \cite{MS2024}.
\begin{lemma}\label{lem:Lorentz-profile}
For $0<a<1$, set
\begin{equation}\label{eq:Fa}
 F_a(x)\coloneqq \frac{1}{\abs{x}\bigl(\log(e/\abs{x})\bigr)^a},
 \qquad x\in\B^n\setminus\{0\}.
\end{equation}
For any $0<\sigma<\infty$, we have $F_a\in L^{(n,\sigma)}(\B^n)$
 if and only if $a\sigma>1$. In particular, $F_a\in L^n$ if and only if $an>1$.
\end{lemma}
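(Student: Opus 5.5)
Since $F_a$ is radial and decreasing in $\abs{x}$, I will compute its decreasing rearrangement explicitly and then read off the Lorentz quasinorm. Let $\omega_n=|\B^n|$. Because $F_a$ is a decreasing function of $r=\abs{x}$, the superlevel sets are balls: for $t$ large, $\{F_a>t\}=\B_{r(t)}$, where $r(t)$ solves $r(\log(e/r))^a=t$. Hence $F_a^*(s)=F_a(x)$ for $\abs{x}=(s/\omega_n)^{1/n}$, that is,
\[
 F_a^*(s)=\Bigl(\tfrac{\omega_n}{s}\Bigr)^{1/n}\Bigl(\log\bigl(e(\omega_n/s)^{1/n}\bigr)\Bigr)^{-a},\qquad 0<s<\omega_n .
\]
For small $s$ the logarithm is comparable to $\tfrac1n\log(e\omega_n/s)$, up to constants depending only on $n$.

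Next I plug this into the Lorentz quasinorm
\[
 \norm{F_a}_{L^{(n,\sigma)}}^\sigma=\int_0^{\omega_n}\bigl(s^{1/n}F_a^*(s)\bigr)^\sigma\,\frac{ds}{s}\aeq_{n,\sigma}\int_0^{\omega_n}\bigl(\log(e\omega_n/s)\bigr)^{-a\sigma}\,\frac{ds}{s}.
\]
The power factors $s^{1/n}$ and $s^{-1/n}$ cancel exactly, which is why only the logarithm survives. Substituting $\tau=\log(e\omega_n/s)$, so that $ds/s=-d\tau$, turns the last integral into $\int_1^\infty \tau^{-a\sigma}\,d\tau$. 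This is finite if and only if $a\sigma>1$.

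The special case $\sigma=n$ gives $L^{(n,n)}=L^n$, so $F_a\in L^n$ if and only if $an>1$. The same conclusion can be checked directly in polar coordinates: $\int_0^1 r^{-n}(\log(e/r))^{-an}r^{n-1}\,dr=\int_1^\infty \tau^{-an}\,d\tau$.

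The only point requiring care is the two-sided comparability of $\log\bigl(e(\omega_n/s)^{1/n}\bigr)$ with $\log(e\omega_n/s)$ on the whole interval $(0,\omega_n)$, rather than only for small $s$. Both quantities are at least $1$ on this interval, and their ratio lies between $1/n$ and $1$, which gives uniform constants. Everything else in the argument is a routine change of variables.
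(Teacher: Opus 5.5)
Your proof is correct. The paper gives no argument of its own for this lemma and only refers to the proof of Lemma~4.1 in \cite{MS2024}. Your explicit computation of $F_a^*$ and of the $L^{(n,\sigma)}$ quasinorm is the standard direct verification, and it makes the statement self-contained.

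Two small slips should be fixed. First, since $F_a=1/\bigl(r(\log(e/r))^a\bigr)$, the radius of $\{F_a>t\}$ solves $r(\log(e/r))^a=1/t$, not $=t$. This applies only for $t\geq 1$; for $t<1$ the superlevel set is all of $\B^n$. Second, the monotonicity of $F_a$ in $r$, which you assert without proof, follows from
\[
\frac{d}{dr}\Bigl(r\bigl(\log(e/r)\bigr)^a\Bigr)=\bigl(\log(e/r)\bigr)^{a-1}\bigl(\log(e/r)-a\bigr)>0 \qquad\text{on }(0,1),
\]
because $\log(e/r)>1>a$. This is where the hypothesis $a<1$ enters, since it is what makes your rearrangement formula exact.

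Finally, the comparability step you single out as the delicate point is not needed. Substituting $\tau=1+\frac1n\log(\omega_n/s)$ directly, so that $ds/s=-n\,d\tau$, gives
\[
\|F_a\|_{L^{(n,\sigma)}}^\sigma=n\,\omega_n^{\sigma/n}\int_1^\infty\tau^{-a\sigma}\,d\tau
\]
exactly.
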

%
%
%
%

Throughout the remainder of the paper for $x\in\B^n\setminus\{0\}$ we use the polar variables
\begin{equation}\label{eq:polar-variables}
 r\coloneqq \abs{x}, \qquad \theta \coloneqq \frac{x}{|x|}, \qquad t \coloneqq \log\frac {e}{|x|} = \log \frac{e}{r}\geq 1.
\end{equation}

\begin{proposition}\label{lem:common-computation}
Let $f,g,\psi\in C^\infty([1,\infty))$, let $\alpha,\gamma,\kappa\in C^\infty([1,\infty))$, and define $u\colon \B^n\setminus\{0\}\to\R^{n+2}$ by
\begin{equation}\label{eq:general-u}
 u(x)\coloneqq 
 \begin{pmatrix}
  \cos\psi(t)\\[1mm]
  \sin\psi(t)\\[1mm]
  g(t)\dfrac{x}{\abs{x}}
 \end{pmatrix}
 \in\R^2\times\R^n.
\end{equation}

Assume
\begin{itemize}
 \item for every $t\geq 1$, it holds
 \begin{equation}\label{eq:general-positive-profiles}
  f(t)>0,
  \qquad
  g(t)>0;
 \end{equation}
 \item for every $t\geq 1$, it holds
 \begin{equation}\label{eq:general-phase-relation}
  \partial_t\psi(t)=f(t).
 \end{equation}
\item for every $t\geq 1$, it holds
\begin{equation}\label{eq:general-coefficient-relation}
 \kappa(t)f(t)
 +(n-1)g(t)\alpha(t)
 -\partial_tg(t)\gamma(t)
 =f(t)^2.
\end{equation}
\end{itemize}

For $1\leq i,j\leq n+2$, set
\begin{equation}\label{eq:general-Omega}
 \R^n \ni \Omega_{ij}(x):=
 \begin{cases}
 \displaystyle -\frac{\kappa(t)}r\theta, & i=1,\quad j=2, \\[2mm]
 \displaystyle \frac{\kappa(t)}r\theta, & i=2,\quad j=1, \\[2mm]
 \displaystyle \brac{ \alpha(t)\cos\psi(t)-\beta(t)\sin\psi(t) } \nabla\brac{\frac{x_k}{|x|}} \\[3mm]
 \displaystyle\qquad +\frac{1}{|x|} \brac{ \gamma(t)\cos\psi(t)-\delta(t)\sin\psi(t) } \frac{x_k}{|x|}\theta, & i=1,\quad j=k+2,\quad 1\leq k\leq n, \\[3mm]
 \displaystyle \brac{ \alpha(t)\sin\psi(t)+\beta(t)\cos\psi(t) } \nabla\brac{\frac{x_k}{|x|}} \\[3mm]
 \displaystyle\qquad +\frac{1}{|x|} \brac{ \gamma(t)\sin\psi(t)+\delta(t)\cos\psi(t) } \frac{x_k}{|x|}\theta, & i=2,\quad j=k+2,\quad 1\leq k\leq n, \\[3mm]
 -\Omega_{ji}(x), & i=k+2,\quad j\in\{1,2\},\quad 1\leq k\leq n, \\[2mm]
 0, & i=j\quad\text{or}\quad 3\leq i,j\leq n+2.
 \end{cases}
\end{equation}

where
$\delta,\beta:[1,\infty)\to\R$ are defined by
\begin{equation}\label{eq:general-delta}
\begin{split}
 \delta(t) \coloneqq \frac1{f(t)}
 \left[
 (n-1)g(t)  -
 \frac{
 \partial_t\brac{
  \brac{
   f(t)^2
   +\brac{\partial_tg(t)}^2
   +(n-1)g(t)^2
  }^{\frac{n-2}{2}}
  \partial_tg(t)
 }
 }{
  \brac{
   f(t)^2
   +\brac{\partial_tg(t)}^2
   +(n-1)g(t)^2
  }^{\frac{n-2}{2}}
 }
 \right]
\end{split}
\end{equation}
and
\begin{equation}\label{eq:general-beta}
\begin{split}
 \beta(t) \coloneqq \frac1{(n-1)g(t)}
 \left[
 -
 \frac{
 \partial_t\brac{
  \brac{
   f(t)^2
   +\brac{\partial_tg(t)}^2
   +(n-1)g(t)^2
  }^{\frac{n-2}{2}}
  f(t)
 }
 }{
  \brac{
   f(t)^2
   +\brac{\partial_tg(t)}^2
   +(n-1)g(t)^2
  }^{\frac{n-2}{2}}
 } 
 +\partial_tg(t)\delta(t)
 \right].
\end{split}
\end{equation}

Equivalently, \eqref{eq:general-Omega} can be displayed as
\begin{equation}\label{eq:general-Omega-matrix}
\resizebox{\textwidth}{!}{$\displaystyle
\Omega(x)
=
\left(
\begin{array}{ccccc}
0
&
-\dfrac{\kappa(t)}{|x|}\theta
&
\begin{aligned}
&
\brac{
 \alpha(t)\cos\psi(t)-\beta(t)\sin\psi(t)
}
\nabla\brac{\frac{x_1}{|x|}}
\\
&\qquad
+
\frac{1}{|x|}
\brac{
 \gamma(t)\cos\psi(t)-\delta(t)\sin\psi(t)
}
\frac{x_1}{|x|}\theta
\end{aligned}
&
\ldots
&
\begin{aligned}
&
\brac{
 \alpha(t)\cos\psi(t)-\beta(t)\sin\psi(t)
}
\nabla\brac{\frac{x_n}{|x|}}
\\
&\qquad
+
\frac{1}{|x|}
\brac{
 \gamma(t)\cos\psi(t)-\delta(t)\sin\psi(t)
}
\frac{x_n}{|x|}\theta
\end{aligned}
\\[4ex]
\dfrac{\kappa(t)}{|x|}\theta
&
0
&
\begin{aligned}
&
\brac{
 \alpha(t)\sin\psi(t)+\beta(t)\cos\psi(t)
}
\nabla\brac{\frac{x_1}{|x|}}
\\
&\qquad
+
\frac{1}{|x|}
\brac{
 \gamma(t)\sin\psi(t)+\delta(t)\cos\psi(t)
}
\frac{x_1}{|x|}\theta
\end{aligned}
&
\ldots
&
\begin{aligned}
&
\brac{
 \alpha(t)\sin\psi(t)+\beta(t)\cos\psi(t)
}
\nabla\brac{\frac{x_n}{|x|}}
\\
&\qquad
+
\frac{1}{|x|}
\brac{
 \gamma(t)\sin\psi(t)+\delta(t)\cos\psi(t)
}
\frac{x_n}{|x|}\theta
\end{aligned}
\\[4ex]
\begin{aligned}
&
-\brac{
 \alpha(t)\cos\psi(t)-\beta(t)\sin\psi(t)
}
\nabla\brac{\frac{x_1}{|x|}}
\\
&\qquad
-
\frac{1}{|x|}
\brac{
 \gamma(t)\cos\psi(t)-\delta(t)\sin\psi(t)
}
\frac{x_1}{|x|}\theta
\end{aligned}
&
\begin{aligned}
&
-\brac{
 \alpha(t)\sin\psi(t)+\beta(t)\cos\psi(t)
}
\nabla\brac{\frac{x_1}{|x|}}
\\
&\qquad
-
\frac{1}{|x|}
\brac{
 \gamma(t)\sin\psi(t)+\delta(t)\cos\psi(t)
}
\frac{x_1}{|x|}\theta
\end{aligned}
&
0
&
\ldots
&
0
\\[4ex]
\vdots
&
\vdots
&
\vdots
&
\ddots
&
\vdots
\\[4ex]
\begin{aligned}
&
-\brac{
 \alpha(t)\cos\psi(t)-\beta(t)\sin\psi(t)
}
\nabla\brac{\frac{x_n}{|x|}}
\\
&\qquad
-
\frac{1}{|x|}
\brac{
 \gamma(t)\cos\psi(t)-\delta(t)\sin\psi(t)
}
\frac{x_n}{|x|}\theta
\end{aligned}
&
\begin{aligned}
&
-\brac{
 \alpha(t)\sin\psi(t)+\beta(t)\cos\psi(t)
}
\nabla\brac{\frac{x_n}{|x|}}
\\
&\qquad
-
\frac{1}{|x|}
\brac{
 \gamma(t)\sin\psi(t)+\delta(t)\cos\psi(t)
}
\frac{x_n}{|x|}\theta
\end{aligned}
&
0
&
\ldots
&
0
\end{array}
\right).
$}
\end{equation}

Then

\begin{itemize} \item $\Omega_{ij}=-\Omega_{ji}$
\item Moreover,
\begin{align}
 \abs{u(x)}^2
 &=1+g(t)^2,
 \label{eq:general-u-size}\\
 \abs{\nabla u(x)}^2
 &=\frac1{r^2}
 \brac{
  f(t)^2
  +\brac{\partial_tg(t)}^2
  +(n-1)g(t)^2
 }.
 \label{eq:general-gradient-size}
\end{align}

\item we have for every $i\in\{1,\dots,n+2\}$
\begin{equation}\label{eq:general-system}
 -\div\brac{\abs{\nabla u}^{n-2}\nabla u^i}
 =
 \sum_{j=1}^{n+2}
 \Omega_{ij}\cdot
 \abs{\nabla u}^{n-2}\nabla u^j
 \qquad
 \text{in }\B^n\setminus\{0\},
\end{equation}

\item Furthermore,
\begin{equation}\label{eq:general-Omega-size}
\begin{split}
 \abs{\Omega(x)}^2 =
 \sum_{i,j=1}^{n+2}\abs{\Omega_{ij}(x)}^2 =
 \frac2{r^2}
 \brac{
  \kappa(t)^2
  +(n-1)\brac{\alpha(t)^2+\beta(t)^2}
  +\gamma(t)^2
  +\delta(t)^2
 }.
\end{split}
\end{equation}
\item Finally,
\begin{equation}\label{eq:energy-identity}
 \int_{\B^n}\abs{\nabla u}^n\,\dd x
 =
 \abs{\S^{n-1}}
 \int_1^\infty
 \brac{
  f(t)^2
  +\brac{\partial_tg(t)}^2
  +(n-1)g(t)^2
 }^{\frac n2}
 \,\dd t.
\end{equation}
\end{itemize}
\end{proposition}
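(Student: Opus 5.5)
The plan is a direct computation in the polar variables \eqref{eq:polar-variables}, organized around an orthogonal splitting of each gradient into a radial part (multiples of $\theta$) and a tangential part (the vectors $\nabla(x_k/\abs{x})$, which are orthogonal to $\theta$). We use $\nabla t=-\theta/r$ and $\nabla(x_k/\abs{x})=(e_k-\theta_k\theta)/r$. Antisymmetry is immediate from \eqref{eq:general-Omega}, since the lower blocks and the $(1,2)$, $(2,1)$ entries are defined as negatives of their transposes.

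\textbf{Pointwise identities.} We compute
\[
\nabla u^1=\frac{f\sin\psi}{r}\theta,\qquad \nabla u^2=-\frac{f\cos\psi}{r}\theta,\qquad \nabla u^{k+2}=-\frac{\partial_t g}{r}\theta_k\theta+g\,\nabla\theta_k .
\]
Using $\sum_k\theta_k^2=1$, $\sum_k\theta_k\nabla\theta_k=0$ and $\sum_k\abs{\nabla\theta_k}^2=(n-1)/r^2$, we obtain \eqref{eq:general-u-size} and \eqref{eq:general-gradient-size}. \eqref{eq:general-Omega-size} follows the same way: the cross terms between the $\nabla\theta_k$ part and the $\theta_k\theta$ part vanish, and the $\cos/\sin$ rotations combine, e.g.
\[
(\alpha\cos\psi-\beta\sin\psi)^2+(\alpha\sin\psi+\beta\cos\psi)^2=\alpha^2+\beta^2 .
\]
For \eqref{eq:energy-identity}, integrate in polar coordinates: $\abs{\nabla u}^n=r^{-n}A(t)^{n/2}$ with $A\coloneqq f^2+(\partial_tg)^2+(n-1)g^2$, so $\int r^{-n}A^{n/2}\,r^{n-1}\,\dd r\,\dd\theta$. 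With $\dd t=-\dd r/r$ this becomes the stated $t$-integral.

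\textbf{The system.} Write $\abs{\nabla u}^{n-2}=r^{2-n}A^{(n-2)/2}$. For radial vector fields we use $\div(h(t)r^{1-n}\theta)=-r^{-n}\partial_t h$, and for tangential fields $\div(h(t)r^{2-n}\nabla\theta_k)=-(n-1)h\,r^{-n}\theta_k$, since $\Delta\theta_k=-(n-1)\theta_k/r^2$ and $\nabla(r^{2-n}h)\perp\nabla\theta_k$. Hence both sides of \eqref{eq:general-system} are $r^{-n}$ times functions of $t$, for $i=1,2$, and $r^{-n}\theta_k$ times functions of $t$, for $i=k+2$.

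\emph{Rows $i=1,2$.} The right-hand side contracts $\Omega_{1j}$ against $\nabla u^j$. The $(1,2)$ entry gives the $\kappa f$ term. The $\nabla\theta_k$-components pair with $g\nabla\theta_k$ and give $(n-1)g(\alpha\cos\psi-\beta\sin\psi)$. The radial components pair with $-\partial_tg\,\theta_k\theta$ and give $-\partial_tg(\gamma\cos\psi-\delta\sin\psi)$. The left-hand side is
\[
r^{-n}\partial_t\bigl(A^{(n-2)/2}f\sin\psi\bigr),
\]
which, using $\psi'=f$, expands into a $\cos\psi$ part $A^{(n-2)/2}f^2$ and a $\sin\psi$ part $\partial_t(A^{(n-2)/2}f)$. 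Matching the $\cos\psi$ coefficients is exactly \eqref{eq:general-coefficient-relation}. Matching the $\sin\psi$ coefficients, after dividing by $A^{(n-2)/2}$, gives $-\partial_t(A^{\frac{n-2}2}f)/A^{\frac{n-2}2}=(n-1)g\beta-\partial_tg\,\delta$, which is the definition \eqref{eq:general-beta}. Row $2$ is the same computation rotated by $\pi/2$ and closes identically.

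\emph{Rows $i=k+2$.} Only $j=1,2$ contribute. The $\nabla\theta_k$ parts of $\Omega$ are orthogonal to the radial $\nabla u^1,\nabla u^2$, so only $\gamma,\delta$ survive. The rotation structure collapses them: $-(\gamma\cos\psi-\delta\sin\psi)\sin\psi+(\gamma\sin\psi+\delta\cos\psi)\cos\psi=\delta$, giving
\[
\text{RHS}=-\frac{\theta_k}{r}\,\delta\,\frac{f}{r}\,r^{2-n}A^{(n-2)/2}\quad\text{up to sign conventions}.
\]
The left-hand side is
\[
r^{-n}\theta_k\Bigl[-\partial_t\bigl(A^{\frac{n-2}2}\partial_tg\bigr)+(n-1)gA^{\frac{n-2}2}\Bigr],
\]
so equality is exactly \eqref{eq:general-delta}.

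\textbf{Main obstacle.} The only real difficulty is sign bookkeeping, chiefly the orientation $\nabla t=-\theta/r$ and the transposition in $\Omega_{k+2,j}=-\Omega_{j,k+2}$. I would verify those signs first, on the $i=k+2$ rows, since they pin down the sign in \eqref{eq:general-delta}.
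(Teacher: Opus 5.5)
Your proposal is correct and is essentially the paper's proof. It uses the same radial/tangential splitting with $\nabla t=-\theta/r$, the same divergence identities $\div(r^{1-n}a(t)\theta)=-r^{-n}\partial_t a$ and $\div(r^{2-n}h(t)\nabla\theta_k)=-(n-1)r^{-n}h\,\theta_k$, and the same coefficient matching (the paper matches coefficients in the frame $w=(\cos\psi,\sin\psi)$, $v=(-\sin\psi,\cos\psi)$ rather than in $\cos\psi,\sin\psi$, which is equivalent).

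One small correction concerns the rows $i=k+2$. Your own collapse identity already incorporates $\Omega_{k+2,j}=-\Omega_{j,k+2}$ and the signs of $\nabla u^1,\nabla u^2$, and it gives $\sum_j\Omega_{k+2,j}\cdot\abs{\nabla u}^{n-2}\nabla u^j=+r^{-n}A^{\frac{n-2}{2}}f\delta\,\theta_k$, without the minus sign you displayed. This plus sign is exactly what makes the equality equivalent to \eqref{eq:general-delta}, so the hedge ``up to sign conventions'' can be dropped.
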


\begin{proof}
We start by observing that the antisymmetry $\Omega_{ij}=-\Omega_{ji}$ is obvious. So is \eqref{eq:general-u-size}.

A few standard computations and observations:
for $1\leq k\leq n$, direct differentiation gives
\begin{equation}\label{eq:general-polar-identities}
 \nabla t=-\frac{\theta}{r},
 \qquad
 \theta\cdot\nabla\brac{\frac{x_k}{|x|}}=0,
 \qquad
 \Delta\brac{\frac{x_k}{|x|}}
 =-\frac{n-1}{r^2}\frac{x_k}{|x|}, \qquad \div (\theta) = \frac{n-1}{r}
\end{equation}
Moreover,
\begin{equation}\label{eq:general-spherical-identities}
 \sum_{k=1}^n\frac{x_k^2}{r^2}=1,
 \qquad
 \sum_{k=1}^n
 \abs{\nabla\brac{\frac{x_k}{|x|}}}^2
 =\frac{n-1}{r^2}.
\end{equation}

For the computation, set
\begin{equation}\label{eq:J}
 J\coloneqq 
 \begin{pmatrix}
  0&-1\\
  1&0
 \end{pmatrix}.
\end{equation}

We define
\begin{equation}\label{eq:general-w-v}
 w(t)\coloneqq 
 \begin{pmatrix}
  \cos\psi(t)\\
  \sin\psi(t)
 \end{pmatrix},
 \qquad
 v(t):=Jw(t)=
 \begin{pmatrix}
  -\sin\psi(t)\\
  \cos\psi(t)
 \end{pmatrix}.
\end{equation}

By chain rule and \eqref{eq:general-phase-relation},
\begin{equation}\label{eq:general-w-v-derivatives}
 \partial_tw(t)=f(t)v(t),
 \qquad
 \partial_tv(t)=-f(t)w(t).
\end{equation}

It follows from \eqref{eq:general-w-v-derivatives} and
\eqref{eq:general-polar-identities} that
\begin{equation}\label{eq:general-derivative-first}
 \partial_{x_k} w(x)=-\frac{f(t)}{|x|} v(t)\, \frac{x_k}{|x|} = - \frac{1}{r} f(t) v(t) \theta_k.
\end{equation}
In particular, we have
\[
 |\nabla w|^2 = \frac{|f(t)|^2}{r^2}.
\]

Similarly, for $1\leq \ell\leq n$,
\begin{equation}\label{eq:general-derivative-last}
 \nabla\brac{g(t)\frac{x_\ell}{|x|}}
 =g(t)\nabla\brac{\frac{x_\ell}{|x|}}
 -\frac{\partial_tg(t)}{r}\frac{x_\ell}{|x|}\theta.
\end{equation}
The two terms on the right-hand side of
\eqref{eq:general-derivative-last} are orthogonal, thus
\[
\begin{split}
 \sum_{\ell=1}^n\abs{\nabla\brac{g(t)\frac{x_\ell}{|x|}}}^2 =& g(t)^2 \sum_{\ell=1}^n \abs{\nabla\brac{\frac{x_\ell}{|x|}}}^2 + \sum_{\ell=1}^n\abs{\frac{\partial_tg(t)}{r}\frac{x_\ell}{|x|}}^2\\[2mm]
 \overset{\eqref{eq:general-spherical-identities}}{=}& g(t)^2 \frac{n-1}{r^2} + \frac{\abs{\partial_tg(t)}^2}{r^2}.
\end{split}
 \]
From \eqref{eq:general-u} we thus find \eqref{eq:general-gradient-size}
\[
  \abs{\nabla u(x)}^2
=\frac1{r^2}
 \brac{
  f(t)^2
  +\brac{\partial_tg(t)}^2
  +(n-1)g(t)^2
 }.
\]

For the remainder of the proof, we set
\begin{equation}\label{eq:general-mu}
 \mu(t)\coloneqq
 \brac{
  f(t)^2
  +\brac{\partial_tg(t)}^2
  +(n-1)g(t)^2
 }^{\frac{n-2}{2}}.
\end{equation}
Then, we have
\begin{equation}\label{eq:general-weight}
 \abs{\nabla u(x)}^{n-2}=r^{2-n}\mu(t).
\end{equation}
We have $\mu>0$ by \eqref{eq:general-positive-profiles} and $\delta,\beta:[1,\infty)\to\R$ from \eqref{eq:general-delta} and \eqref{eq:general-beta} become
\begin{equation}\label{eq:general-deltav2}
\begin{split}
 \delta(t)
 &\coloneqq \frac1{f(t)}
 \Bigg[
 (n-1)g(t)
  -
 \frac{
 \partial_t\brac{
  \mu(t)
  \partial_tg(t)
 }
 }{\mu(t)}
 \Bigg],
\end{split}
\end{equation}
and
\begin{equation}\label{eq:general-betav2}
\begin{split}
 \beta(t)
 &\coloneqq \frac1{(n-1)g(t)}
 \Bigg[
 -
 \frac{
 \partial_t\brac{
  \mu(t)
  f(t)
 }
 }{\mu(t)}
 +\partial_tg(t)\delta(t)
 \Bigg].
\end{split}
\end{equation}

To compute $\div(|\nabla u|^{n-2} \nabla u)$ we first observe that for every scalar or vector-valued function $a=a(t)$,
\begin{equation}\label{eq:general-radial-divergence}
 \div\brac{r^{1-n}a(t)\theta} =-r^{-n}\, \partial_ta(t).
\end{equation}
Indeed, this follows from a direct computation using \eqref{eq:general-polar-identities} for the second and the fourth equality
\[
\begin{split}
 \div\brac{r^{1-n}a(t)\theta} & = r^{1-n} a(t) \div(\theta) + \theta \cdot \nabla \brac{ r^{1-n} a(t)}\\[2mm]
 & =  r^{1-n} a(t) \frac{n-1}{r} + \theta \cdot \nabla \brac{ r^{1-n} a(t)}\\[2mm]
 & =(n-1)r^{-n} a(t)  + (1-n)  r^{-n}  a(t) + r^{1-n}  \theta \cdot  \nabla a(t)\\[2mm]
 & =-r^{1-n} \, \theta \cdot  \partial_t a(t)\, \frac{\theta}{r} .
 \end{split}
\]

Using \eqref{eq:general-weight}, we obtain  
\begin{equation}\label{eq:general-left-first}
\begin{split}
 -\div\brac{\abs{\nabla u}^{n-2}\nabla w}
  &\overset{\eqref{eq:general-derivative-first}}{=} -\div\brac{-r^{1-n}\mu(t) f(t) v(t) \theta} \\[2mm]
  &\overset{\eqref{eq:general-radial-divergence}}{=} -r^{-n}\partial_t \brac{\mu(t) f(t) v(t) } \\[2mm]
 &\overset{\eqref{eq:general-w-v-derivatives}}{=}r^{-n} \brac{ \mu(t) f(t)^2 w(t) -\partial_t\brac{\mu(t) f(t)}v(t) }\\[2mm]
 & \overset{\eqref{eq:general-weight}}{=} |\nabla u|^{n-2} r^{-2} \brac{ f(t)^2 w(t) -\frac{\partial_t\brac{\mu(t) f(t)}v(t)}{\mu(t)} }.
\end{split}
\end{equation}

For the last $n$ components,
\eqref{eq:general-derivative-last} and
\eqref{eq:general-weight} give
\begin{equation}\label{eq:general-weighted-last-gradient}
\begin{split}
 \abs{\nabla u}^{n-2}
 \nabla\brac{g(t) \frac{x_k}{|x|}}
 =
 r^{2-n}\mu(t) g(t)
 \nabla\brac{\frac{x_k}{|x|}} -
 r^{1-n}\mu(t)\,\partial_tg(t)
 \frac{x_k}{|x|}\theta.
\end{split}
\end{equation}
The coefficient $r^{2-n}\mu(t) g(t)$ is radial, thus $\nabla \brac{r^{2-n}\mu(t) g(t)} \cdot 
\nabla\brac{x_k/|x|} = 0$ by \eqref{eq:general-polar-identities}, so
\[
 \div\brac{r^{2-n}\mu(t) g(t)
 \nabla\brac{\frac{x_k}{|x|}}
 }={r^{2-n}\mu(t) g(t)
 \lap \brac{\frac{x_k}{|x|}}
 }
 \overset{\eqref{eq:general-polar-identities}}{=}-(n-1)r^{-n}\mu(t) g(t)\frac{x_k}{|x|}.
\]
Similarly, using
$\theta\cdot\nabla\brac{x_k/|x|}=0$ and
\eqref{eq:general-radial-divergence},
\[
 \div\brac{
 -
 r^{1-n}\mu(t)\,\partial_tg(t)
 \frac{x_k}{|x|}\theta
 } = \div\brac{
 -
 r^{1-n}\mu(t)\,\partial_tg(t)
 \theta
 } \frac{x_k}{|x|}
 =r^{-n}
 \partial_t\brac{\mu(t)\,\partial_tg(t)}
 \frac{x_k}{|x|}.
\]
Therefore, again with \eqref{eq:general-weight},
\begin{equation}\label{eq:general-left-last}
\begin{split}
 -\div\brac{
  \abs{\nabla u}^{n-2}
  \nabla\brac{g(t)\frac{x_k}{|x|}}
 }
 =|\nabla u|^{n-2} r^{-2}
 \brac{
  (n-1) g(t)
  -\frac{\partial_t\brac{\mu(t)\,\partial_tg(t)}} {\mu(t)}
 } 
 \frac{x_k}{|x|}. 
\end{split}
\end{equation}

Now we compute $\Omega \cdot \nabla u$. We begin by computing $\sum_{j=1}^{n+2} \Omega_{ij} \cdot \nabla u^j$ for $i=1,2$, and for this we first observe
\begin{equation}\label{eq:Omega12nablau12}
 \begin{pmatrix}
  \Omega_{12}\cdot\nabla u^2\\
  \Omega_{21}\cdot\nabla u^1
 \end{pmatrix}
 = \begin{pmatrix}
  \Omega_{12}\cdot\nabla w^2\\
  \Omega_{21}\cdot\nabla w^1
 \end{pmatrix} =r^{-2}\kappa(t)f(t)\,w(t),
\end{equation}
Indeed, by \eqref{eq:general-derivative-first}
\[
 \nabla u^1=-\frac{f(t)}{r}v_1(t)\theta,
 \qquad
 \nabla u^2=-\frac{f(t)}{r}v_2(t)\theta.
\]
Hence, using $\Omega_{12}=-\frac{\kappa(t)}{r}\theta$, $\Omega_{21}=\frac{\kappa(t)}{r}\theta$, and $\abs{\theta}^2=1$,
\[
 \Omega_{12}\cdot\nabla u^2
 =\frac{\kappa(t)f(t)}{r^2}v_2(t),
 \qquad
 \Omega_{21}\cdot\nabla u^1
 =-\frac{\kappa(t)f(t)}{r^2}v_1(t).
\]
By \eqref{eq:general-w-v}, $v_1(t)=-w_2(t)$ and $v_2(t)=w_1(t)$, so we obtain \eqref{eq:Omega12nablau12}.

Next we compute the mixed contribution. We claim
\begin{equation}\label{eq:general-right-first-mixed}
 \begin{pmatrix}
  \displaystyle\sum_{k=1}^n\Omega_{1,k+2}\cdot\nabla\brac{g(t)\frac{x_k}{|x|}}\\[1ex]
  \displaystyle\sum_{k=1}^n\Omega_{2,k+2}\cdot\nabla\brac{g(t)\frac{x_k}{|x|}}
 \end{pmatrix}
 =\frac1{r^2}
 \Bigl[
  (n-1)g(t)\bigl(\alpha(t)w(t)+\beta(t)v(t)\bigr)
  -\partial_tg(t)\bigl(\gamma(t)w(t)+\delta(t)v(t)\bigr)
 \Bigr].
\end{equation}
Indeed, by \eqref{eq:general-Omega} and \eqref{eq:general-w-v}, for $i=1,2$ and $1\leq k\leq n$,
\begin{equation}\label{eq:Omegaikp2}
 \Omega_{i,k+2}
 =\bigl(\alpha(t)w(t)+\beta(t)v(t)\bigr)_i\nabla\brac{\frac{x_k}{|x|}}
 +\frac1r\bigl(\gamma(t)w(t)+\delta(t)v(t)\bigr)_i\frac{x_k}{|x|}\theta.
\end{equation}
 Dotting with \eqref{eq:general-derivative-last}
 and using $\theta\cdot\nabla\brac{x_k/|x|}=0$ from \eqref{eq:general-polar-identities},
\[
 \Omega_{i,k+2}\cdot\nabla\brac{g(t)\frac{x_k}{|x|}}
 =\bigl(\alpha(t)w(t)+\beta(t)v(t)\bigr)_i\,g(t)\abs{\nabla\brac{\frac{x_k}{|x|}}}^2
 -\bigl(\gamma(t)w(t)+\delta(t)v(t)\bigr)_i\,\partial_tg(t)\,\frac{x_k^2}{r^4}.
\]
Summing over $k$ and using \eqref{eq:general-spherical-identities},
\[
 \sum_{k=1}^n\Omega_{i,k+2}\cdot\nabla\brac{g(t)\frac{x_k}{|x|}}
 =\frac1{r^2}
 \Bigl[
  (n-1)g(t)\bigl(\alpha(t)w(t)+\beta(t)v(t)\bigr)_i
  -\partial_tg(t)\bigl(\gamma(t)w(t)+\delta(t)v(t)\bigr)_i
 \Bigr],
\]
which is \eqref{eq:general-right-first-mixed}.

Thus for $i=1,2$ we have
\[
\begin{split}
 \sum_{j=1}^{n+2} \Omega_{ij} \cdot \nabla u^j
 &=r^{-2}\kappa(t)f(t)\,w_i(t) +\frac1{r^2} \Bigl[ (n-1)g(t)\bigl(\alpha(t)w(t)+\beta(t)v(t)\bigr)_i -\partial_tg(t)\bigl(\gamma(t)w(t)+\delta(t)v(t)\bigr)_i \Bigr]\\[2mm]
 &=r^{-2} \Bigl[ \bigl(\kappa(t)f(t)+(n-1)g(t)\alpha(t)-\partial_tg(t)\gamma(t)\bigr)w_i(t) +\bigl((n-1)g(t)\beta(t)-\partial_tg(t)\delta(t)\bigr)v_i(t) \Bigr]\\[2mm]
 &\overset{\eqref{eq:general-coefficient-relation}}{=}
 r^{-2} \Bigl[ f(t)^2w_i(t) +\bigl((n-1)g(t)\beta(t)-\partial_tg(t)\delta(t)\bigr)v_i(t) \Bigr]\\[2mm]
 &\overset{\eqref{eq:general-betav2}}{=}
 r^{-2} \Bigl[ f(t)^2w_i(t) -\frac{\partial_t\brac{\mu(t)f(t)}}{\mu(t)}v_i(t) \Bigr].
\end{split}
\]

Comparing with \eqref{eq:general-left-first}, we conclude
\[
 \abs{\nabla u}^{n-2}\sum_{j=1}^{n+2} \Omega_{ij}\cdot\nabla u^j
 =-\div\brac{\abs{\nabla u}^{n-2}\nabla u^i}
 \qquad
 \text{for }i=1,2,
\]
which is \eqref{eq:general-system} for $i=1,2$.

For $i=k+2$ with $1\leq k\leq n$, we argue analogously. Since $\Omega_{k+2,j}=0$ for $3\leq j\leq n+2$ and $\Omega_{k+2,j}=-\Omega_{j,k+2}$ for $j=1,2$, we have
\[
 \sum_{j=1}^{n+2}\Omega_{k+2,j}\cdot\nabla u^j
 =-\sum_{i=1}^2\Omega_{i,k+2}\cdot\nabla u^i.
\]

By \eqref{eq:general-derivative-first}, $\nabla u^i=-\frac{f(t)}r v_i(t)\theta$ for $i=1,2$, and by \eqref{eq:Omegaikp2},
again since $\theta\cdot\nabla\brac{x_k/|x|}=0$ by \eqref{eq:general-polar-identities}, only the $\theta$-term survives upon dotting with $\nabla u^i$:
\[
 \Omega_{i,k+2}\cdot\nabla u^i
 =-\frac{f(t)}{r^2}\frac{x_k}{|x|}\,v_i(t)\bigl(\gamma(t)w(t)+\delta(t)v(t)\bigr)_i.
\]
Summing over $i=1,2$ and using that $w(t),v(t)$ are orthonormal, so $w(t)\cdot v(t)=0$ and $\abs{v(t)}^2=1$,
\[
 \sum_{i=1}^2 v_i(t)\bigl(\gamma(t)w(t)+\delta(t)v(t)\bigr)_i
 =\gamma(t)\,w(t)\cdot v(t)+\delta(t)\abs{v(t)}^2
 =\delta(t).
\]
Hence
\begin{equation}\label{eq:general-right-last}
 \sum_{j=1}^{n+2}\Omega_{k+2,j}\cdot\nabla u^j
 =\frac1{r^2}f(t)\delta(t)\,\frac{x_k}{|x|},
 \qquad
 1\leq k\leq n.
\end{equation}

By \eqref{eq:general-deltav2},
we have \eqref{eq:general-right-last} reads
\[
 \sum_{j=1}^{n+2}\Omega_{k+2,j}\cdot\nabla u^j
 =\frac1{r^2}
 \brac{
  (n-1)g(t)
  -\frac{\partial_t\brac{\mu(t)\partial_tg(t)}}{\mu(t)}
 }
 \frac{x_k}{|x|}.
\]
Comparing with \eqref{eq:general-left-last}, we conclude
\[
 \abs{\nabla u}^{n-2}\sum_{j=1}^{n+2}\Omega_{k+2,j}\cdot\nabla u^j
 =-\div\brac{\abs{\nabla u}^{n-2}\nabla u^{k+2}},
 \qquad
 1\leq k\leq n,
\]
so \eqref{eq:general-system} also holds for $i=k+2$, $1\leq k\leq n$, i.e. for $i=3,\dots,n+2$. Together with the case $i=1,2$ established above, this proves \eqref{eq:general-system} for every $i=1,\dots,n+2$.

Next, to prove \eqref{eq:general-Omega-size} we observe that since $\theta\cdot\nabla\brac{x_k/|x|}=0$ by \eqref{eq:general-polar-identities},
\[
\begin{split}
  &\abs{\brac{
  \alpha(t)\cos\psi(t)-\beta(t)\sin\psi(t)
 }
 \nabla\brac{\frac{x_k}{|x|}}
 +\frac{1}{|x|}
 \brac{
  \gamma(t)\cos\psi(t)-\delta(t)\sin\psi(t)
 }
 \frac{x_k}{|x|}\theta}^2\\[2mm]
 = &\abs{\brac{
  \alpha(t)\cos\psi(t)-\beta(t)\sin\psi(t)
 }
 \nabla\brac{\frac{x_k}{|x|}}}^2
 +\frac{1}{r^2} \abs{
 \brac{
  \gamma(t)\cos\psi(t)-\delta(t)\sin\psi(t)
 }
 \frac{x_k}{|x|}}^2\\[2mm]
 =&\abs{\brac{
  \alpha(t)w(t)+\beta(t) v(t) }_1
 \nabla\brac{\frac{x_k}{|x|}}}^2
 +\frac{1}{r^2} \abs{
 \brac{
  \gamma(t)w(t)+\delta(t)v(t)
 }_1
 \frac{x_k}{|x|}}^2
 \end{split}
\]
and similarly,
\[
\begin{split}
&
\abs{\brac{ 
 \alpha(t)\sin\psi(t)+\beta(t)\cos\psi(t)
}
\nabla\brac{\frac{x_k}{|x|}}
+
\frac{1}{|x|}
\brac{
 \gamma(t)\sin\psi(t)+\delta(t)\cos\psi(t)
}
\frac{x_k}{|x|}\theta}^2  \\[2mm]
=&\abs{\brac{
  \alpha(t)w(t)+\beta(t) v(t) }_2
 \nabla\brac{\frac{x_k}{|x|}}}^2
 +\frac{1}{r^2} \abs{
 \brac{
  \gamma(t)w(t)+\delta(t)v(t)
 }_2
 \frac{x_k}{|x|}}^2
 \end{split}
\]
Thus we obtain for $i=1,2$ and $1\leq k\leq n$,
\[
 \abs{\Omega_{i,k+2}(x)}^2
 =\bigl(\alpha(t)w(t)+\beta(t)v(t)\bigr)_i^2\abs{\nabla\brac{\frac{x_k}{|x|}}}^2
 +\frac{1}{r^2}\bigl(\gamma(t)w(t)+\delta(t)v(t)\bigr)_i^2\brac{\frac{x_k}{|x|}}^2.
\]
Summing over $i=1,2$ and using that $w(t),v(t)$ are orthonormal, so
\[
 \sum_{i=1}^2\bigl(\alpha(t)w(t)+\beta(t)v(t)\bigr)_i^2=\alpha(t)^2+\beta(t)^2,
 \qquad
 \sum_{i=1}^2\bigl(\gamma(t)w(t)+\delta(t)v(t)\bigr)_i^2=\gamma(t)^2+\delta(t)^2,
\]
we get
\[
 \sum_{i=1}^2\abs{\Omega_{i,k+2}(x)}^2
 =\brac{\alpha(t)^2+\beta(t)^2}\abs{\nabla\brac{\frac{x_k}{|x|}}}^2
 +\frac1{r^2}\brac{\gamma(t)^2+\delta(t)^2}\brac{\frac{x_k}{|x|}}^2.
\]
Summing over $1\leq k\leq n$ and using \eqref{eq:general-spherical-identities},
\begin{equation}\label{eq:general-Omega-mixed-norm}
 \sum_{i=1}^2\sum_{k=1}^n\abs{\Omega_{i,k+2}(x)}^2
 =\frac1{r^2}
 \Bigl[
  (n-1)\brac{\alpha(t)^2+\beta(t)^2}
  +\gamma(t)^2+\delta(t)^2
 \Bigr].
\end{equation}
By antisymmetry $\Omega_{k+2,i}=-\Omega_{i,k+2}$, so $\abs{\Omega_{k+2,i}(x)}^2=\abs{\Omega_{i,k+2}(x)}^2$, and the same sum \eqref{eq:general-Omega-mixed-norm} is contributed again by the entries $\Omega_{k+2,i}$, $1\leq k\leq n$, $i=1,2$. Together with
\[
 \abs{\Omega_{12}(x)}^2+\abs{\Omega_{21}(x)}^2=\frac{2\kappa(t)^2}{r^2}
\]
and $\Omega_{ij}=0$ for $i=j$ or $3\leq i,j\leq n+2$, we conclude
\[
 \abs{\Omega(x)}^2
 =\sum_{i,j=1}^{n+2}\abs{\Omega_{ij}(x)}^2
 =\frac{2\kappa(t)^2}{r^2}
 +\frac2{r^2}
 \Bigl[
  (n-1)\brac{\alpha(t)^2+\beta(t)^2}
  +\gamma(t)^2+\delta(t)^2
 \Bigr],
\]
which is \eqref{eq:general-Omega-size}.

Lastly, \eqref{eq:energy-identity} follows by polar coordinates from \eqref{eq:general-gradient-size}
\[
\begin{split}
  \int_{\B^n}\abs{\nabla u}^n\,\dd x  =&\abs{\S^{n-1}} \int_{r=0}^1 \brac{
  f(t)^2
  +\brac{\partial_tg(t)}^2
  +(n-1)g(t)^2
 }^{\frac{n}{2}} \frac{\dd r}{r}\\
 =&\abs{\S^{n-1}} \int_{t=1}^\infty \brac{
  f(t)^2
  +\brac{\partial_tg(t)}^2
  +(n-1)g(t)^2
 }^{\frac{n}{2}} \dd t\\
  \end{split}
\]
We can conclude.
\end{proof}

\section{Proof of Theorem~\ref{thm:Ln-counterexample}}

\begin{proof}
Set
\begin{equation}\label{eq:first-profiles}
 f(t)\coloneqq t^{-1/2},
 \qquad
 \psi(t)\coloneqq 2\sqrt t,
 \qquad
 g(t)\coloneqq t^{-1}.
\end{equation}
and 
\begin{equation}\label{eq:first-coefficients}
 \alpha(t)\coloneqq 0,
 \qquad
 \gamma(t)\coloneqq 0,
 \qquad
 \kappa(t)\coloneqq f(t).
\end{equation}
Then \eqref{eq:general-phase-relation}, \eqref{eq:general-positive-profiles}, and \eqref{eq:general-coefficient-relation} are satisfied.

Let $\delta$ and $\beta$ be defined by
\eqref{eq:general-delta} and \eqref{eq:general-beta}.

Set (as in \eqref{eq:general-mu})
\begin{equation}\label{eq:first-mu}
 \mu(t)\coloneqq
 \brac{
  t^{-1}+(n-1)t^{-2}+t^{-4}
 }^{\frac{n-2}{2}} =  \brac{
  f(t)^2
  +\brac{\partial_tg(t)}^2
  +(n-1)g(t)^2
 }^{\frac{n-2}{2}}.
\end{equation}
Then
\begin{equation}\label{eq:first-log-mu}
 \frac{\partial_t\mu(t)}{\mu(t)}
 =\frac{n-2}{2}
 \frac{-t^{-2}-2(n-1)t^{-3}-4t^{-5}}
      {t^{-1}+(n-1)t^{-2}+t^{-4}}.
\end{equation}
Hence for some dimensional constant $C_n > 0$, it holds
\begin{equation}\label{eq:first-log-mu-bound}
 \forall t\geq 1,\qquad \abs{\frac{\partial_t\mu(t)}{\mu(t)}} \leq\frac{C_n}{t}.
\end{equation}
Since
\[
 \partial_tg=-t^{-2},
 \qquad
 \partial_t^2g=2t^{-3},
\]
\eqref{eq:general-delta}, or rather \eqref{eq:general-deltav2} gives
\begin{equation}\label{eq:first-delta}
 \delta(t)
 =t^{1/2}
 \brac{
  (n-1)t^{-1}
  -2t^{-3}
  +\frac{\partial_t\mu(t)}{\mu(t)}t^{-2}
 }.
\end{equation}
and thus
\begin{equation}\label{eq:first-delta-bound}
 \abs{\delta(t)}\leq C_n t^{-1/2}, \quad t \in [1,\infty).
\end{equation}
Likewise, \eqref{eq:general-beta} or rather \eqref{eq:general-betav2} gives
\begin{equation}\label{eq:first-beta}
 \beta(t)
 =\frac{t}{n-1}
 \brac{
  \frac12t^{-3/2}
  -\frac{\partial_t\mu(t)}{\mu(t)}t^{-1/2}
  -t^{-2}\delta(t)
 }.
\end{equation}
and thus 
\begin{equation}\label{eq:first-beta-bound}
 \abs{\beta(t)}\leq C_n t^{-1/2}, \quad t \in [1,\infty).
\end{equation}

Define $u$ by \eqref{eq:general-u} and define
$\Omega$ by \eqref{eq:general-Omega}.  Explicitly,
\begin{equation}\label{eq:first-u-explicit}
 u(x)=
 \begin{pmatrix}
 \displaystyle \cos\brac{2\sqrt{\log\frac e{\abs{x}}}} \\[5mm]
 \displaystyle \sin\brac{2\sqrt{\log\frac e{\abs{x}}}} \\[5mm]
 \displaystyle \frac{x}{\abs{x}\log(e/\abs{x})}
 \end{pmatrix}
 \qquad
 \text{for }x\in\B^n\setminus\{0\}.
\end{equation}

Clearly $u$ is bounded, but does not have a continuous extension in $0$.

\Cref{lem:common-computation} gives
\begin{equation}\label{eq:first-system-punctured}
 -\div\brac{\abs{\nabla u}^{n-2}\nabla u^i}
 =
 \sum_{j=1}^{n+2}
 \Omega_{ij}\cdot
 \abs{\nabla u}^{n-2}\nabla u^j
 \qquad
 \text{in }\B^n\setminus\{0\}
\end{equation}
for every $i=1,\dots,n+2$.

By \eqref{eq:general-Omega-size},
\[
 \begin{split}
 \abs{\Omega(x)}^2
 =
 \sum_{i,j=1}^{n+2}\abs{\Omega_{ij}(x)}^2=
 \frac2{r^2}
 \brac{
  \kappa(t)^2
  +(n-1)\brac{\alpha(t)^2+\beta(t)^2}
  +\gamma(t)^2
  +\delta(t)^2
 }.
\end{split}
\]
By \eqref{eq:first-coefficients},
\eqref{eq:first-delta-bound}, and
\eqref{eq:first-beta-bound}, and \eqref{eq:general-gradient-size}, we obtain 
\begin{equation}\label{eq:first-Omega-upper-lower}
 |\nabla u(x)| \aeq_{n} \abs{\Omega(x)} \aeq_{n} \frac{1}{|x| \sqrt{\log (e/|x|)}},  
  \quad x \in \B^n \setminus \{0\}.
\end{equation}
Hence we have \eqref{eq:thm1-Omega-size} and by Lemma \ref{lem:Lorentz-profile}, with $a=1/2$, we have \eqref{eq:thm1-full-Lorentz-range}, $\Omega, \nabla u \in L^{(n,q)} (\B^n)$ for any $q> 2$.

In particular, since $n>2$ we have $\nabla u, \Omega \in L^n(\B^n)$. Since $u$ is bounded, we obtain that $u \in W^{1,n} \cap L^\infty(\B^n)$.

It only remains to show that \eqref{eq:first-system-punctured} holds in distributional sense in the whole disk -- which is a standard argument since the right-hand side is $L^1$ and $u \in W^{1,n}$: Let $\varphi \in C_c^\infty(\B^n)$, then for small $\eps > 0$ and $\B_{\eps}$ the ball centred at the origin and of radius $\eps$,
\[
\begin{split}
 \int_{\B^n \setminus \B_{\eps}} |\nabla u|^{n-2} \nabla u \nabla \varphi
 =\int_{\B^n\setminus \B_{\eps} } \Omega \cdot |\nabla u|^{n-2} \nabla u\, \varphi +\int_{\partial \B_{\eps}} |\nabla u|^{n-2} \nabla u\cdot \nu \varphi\\
 \end{split}
 \]
Observe that by \eqref{eq:first-Omega-upper-lower}
\[
\begin{split}
 \abs{\int_{\partial \B_{\eps}} |\nabla u|^{n-2} \nabla u\cdot \nu\, \varphi}
 \aleq\eps^{n-1} \frac{\|\varphi\|_{L^{\infty}(\B)}}{\eps^{n-1} \log^{\frac{n-1}{2}}(e/\eps)} \xrightarrow{\eps \to 0} 0.
 \end{split}
\]
Thus we obtain the distributional equation by taking $\eps \to 0$. This proves \Cref{thm:Ln-counterexample}.
\end{proof}

\section{Proof of Theorem~\ref{thm:gradient-counterexample}}

\begin{proof}
Assume $n\geq4$ and set
\begin{equation}\label{eq:second-profiles}
 f(t)\coloneqq t^{-3/8},
 \qquad
 \psi(t)\coloneqq \frac85t^{5/8},
 \qquad
 g(t)\coloneqq t^{-17/16}.
\end{equation}
Then \eqref{eq:general-phase-relation} and \eqref{eq:general-positive-profiles} are satisfied.

Let $\delta(t)$ and $\beta(t)$ be defined by \eqref{eq:general-delta} and \eqref{eq:general-beta}, and set
\begin{align}
 \alpha(t) &\coloneqq -\frac{\delta(t)+\partial_t\beta(t)}{f(t)}, \label{eq:second-alpha}\\[2mm]
 \gamma(t) &\coloneqq -\partial_t\alpha(t)+\beta(t)f(t), \label{eq:second-gamma}\\[2mm]
 \kappa(t) &\coloneqq \frac{ f(t)^2-(n-1)g(t)\alpha(t)+\partial_tg(t)\gamma(t) }{f(t)}. \label{eq:second-kappa}
\end{align}
Clearly, \eqref{eq:general-coefficient-relation} is satisfied.

Define $u$ by \eqref{eq:general-u}, that is 
\begin{equation}\label{eq:second-u-explicit}
 u(x)=
 \begin{pmatrix}
  \cos\brac{\frac85t^{5/8}}\\[3mm]
  \sin\brac{\frac85t^{5/8}}\\[3mm]
  t^{-17/16}\theta
 \end{pmatrix},
 \qquad
 t=\log\frac e{\abs{x}},
 \quad
 \theta=\frac{x}{\abs{x}}.
\end{equation}
Clearly $u$ is bounded, but does not have a continuous extension in $0$.

Set
\begin{equation}\label{eq:second-zeta}
 \zeta(t)\coloneqq \int_1^t\kappa(s)\,\dd s,
\end{equation}
and, for $1\leq i,j\leq n+2$, define
\begin{equation}\label{eq:second-Xi}
 \Xi_{ij}(x)\coloneqq 
 \begin{cases}
 \zeta(t), &i=1,\ j=2, \\[2mm]
 -\zeta(t), &i=2,\ j=1, \\[2mm]
 \displaystyle \brac{\alpha(t)\cos\psi(t)-\beta(t)\sin\psi(t)}\frac{x_k}{|x|}, &i=1,\ j=k+2,\ 1\leq k\leq n, \\[4mm]
 \displaystyle \brac{\alpha(t)\sin\psi(t)+\beta(t)\cos\psi(t)}\frac{x_k}{|x|}, &i=2,\ j=k+2,\ 1\leq k\leq n, \\[4mm]
 -\Xi_{ji}(x), &i=k+2,\ j\in\{1,2\},\ 1\leq k\leq n, \\[2mm]
 0, &i=j\ \text{or}\ 3\leq i,j\leq n+2.
 \end{cases}
\end{equation}
Set
\begin{equation}\label{eq:second-Omega-definition}
 \Omega\coloneqq \nabla\Xi
 \qquad
 \text{on }\B^n\setminus\{0\}.
\end{equation}

We check that this $\Omega$ agrees with \eqref{eq:general-Omega}. With $w(t)$, $v(t)$ as in \eqref{eq:general-w-v}, set $c(t)\coloneqq \alpha(t)w(t)+\beta(t)v(t)$, so that $c_1(t)=\alpha(t)\cos\psi(t)-\beta(t)\sin\psi(t)$ and $c_2(t)=\alpha(t)\sin\psi(t)+\beta(t)\cos\psi(t)$. By \eqref{eq:general-w-v-derivatives},
\begin{equation}\label{eq:second-c-derivative}
 \partial_tc(t)
 =\brac{\partial_t\alpha(t)-\beta(t)f(t)}w(t)
 +\brac{\alpha(t)f(t)+\partial_t\beta(t)}v(t).
\end{equation}
By \eqref{eq:second-alpha} and \eqref{eq:second-gamma},
\begin{equation}\label{eq:second-c-derivative-relations}
 \partial_t\alpha(t)-\beta(t)f(t)=-\gamma(t),
 \qquad
 \alpha(t)f(t)+\partial_t\beta(t)=-\delta(t),
\end{equation}
so that
\begin{equation}\label{eq:second-c-derivative-final}
 -\partial_tc(t)=\gamma(t)w(t)+\delta(t)v(t).
\end{equation}
Since $\Xi_{i,k+2}(x)=c_i(t)\,x_k/|x|$ and $\nabla t=-\theta/r$,
\[
 \nabla\Xi_{i,k+2}(x)
 =c_i(t)\nabla\brac{\frac{x_k}{|x|}}
 -\frac{\partial_tc_i(t)}r\frac{x_k}{|x|}\theta
 \overset{\eqref{eq:second-c-derivative-final}}=
 c_i(t)\nabla\brac{\frac{x_k}{|x|}}
 +\frac1r\bigl(\gamma(t)w_i(t)+\delta(t)v_i(t)\bigr)\frac{x_k}{|x|}\theta,
 \qquad
 i=1,2,
\]
which is exactly the $(i,k+2)$ entry of \eqref{eq:general-Omega}. 

Since also $\nabla\Xi_{12}(x)=\partial_t \zeta(t)\nabla t=-\kappa(t)\theta/r$, this shows $\nabla\Xi=\Omega$ as in \eqref{eq:general-Omega}. 
Therefore \Cref{lem:common-computation} yields
\begin{equation}\label{eq:second-system-punctured}
 -\div\brac{\abs{\nabla u}^{n-2}\nabla u^i}
 =
 \sum_{j=1}^{n+2}
 \Omega_{ij}\cdot
 \abs{\nabla u}^{n-2}\nabla u^j
 \qquad
 \text{in }\B^n\setminus\{0\}
\end{equation}
for every $i=1,\dots,n+2$.

We now estimate the coefficients. By \eqref{eq:second-profiles},
\begin{equation}\label{eq:second-gradient-profile}
 f(t)^2+\brac{\partial_tg(t)}^2+(n-1)g(t)^2
 =t^{-3/4}\sigma(t),
 \qquad
 \sigma(t)\coloneqq 1+(n-1)t^{-11/8}+\frac{289}{256}t^{-27/8},
\end{equation}
and, since $1\leq\sigma(t)\leq\sigma(1)$ for $t\geq1$, \eqref{eq:general-gradient-size} gives
\begin{equation}\label{eq:second-gradient-size}
 \abs{\nabla u(x)}\aeq_n\frac1{r\,t^{3/8}},
 \qquad
 x\in\B^n\setminus\{0\}.
\end{equation}
In particular $u \in W^{1,n} \cap L^\infty(\B^n)$.

Set
\begin{equation}\label{eq:second-mu}
 \mu(t)\coloneqq \brac{f(t)^2+\brac{\partial_tg(t)}^2+(n-1)g(t)^2}^{\frac{n-2}2}
 =t^{-\frac{3(n-2)}8}\sigma(t)^{\frac{n-2}2},
\end{equation}
and
\begin{equation}\label{eq:second-epsilon}
 \varepsilon(t)\coloneqq \frac{n-2}2\frac{\partial_t\sigma(t)}{\sigma(t)},
 \qquad\text{so that}\qquad
 \frac{\partial_t\mu(t)}{\mu(t)}=-\frac{3(n-2)}{8t}+\varepsilon(t).
\end{equation}
Direct differentiation of \eqref{eq:second-gradient-profile} gives, for some dimensional $C_n>0$,
\begin{equation}\label{eq:second-epsilon-bounds}
 \abs{\partial_t^k\varepsilon(t)}\leq C_nt^{-19/8-k},
 \qquad
 k=0,1,2,
 \qquad
 t\geq1.
\end{equation}

Using $\partial_tg(t)=-\frac{17}{16}t^{-33/16}$ and $\partial_t^2g(t)=\frac{561}{256}t^{-49/16}$, \eqref{eq:general-delta} (or rather \eqref{eq:general-deltav2}) gives the exact identity
\begin{equation}\label{eq:second-delta-formula}
 \delta(t)
 =(n-1)t^{-11/16}
 -\frac{102n+357}{256}t^{-43/16}
 +\frac{17}{16}\varepsilon(t)t^{-27/16}.
\end{equation}
We also have
\begin{equation}\label{eq:general-betav3}
\begin{split}
 \beta(t)
 &\coloneqq \frac1{(n-1)g(t)}
 \Bigg[
 -
 \frac{
 \partial_t\brac{
  \mu(t)
  f(t)
 }
 }{\mu(t)}
 +\partial_tg(t)\delta(t)
 \Bigg].
\end{split}
\end{equation}
and \eqref{eq:second-mu}-\eqref{eq:second-profiles} gives
\begin{equation}\label{eq:second-beta-formula}
 \beta(t)
 =\frac38t^{-5/16}
 -\frac1{n-1}\varepsilon(t)t^{11/16}
 -\frac{17}{16(n-1)}t^{-1}\delta(t).
\end{equation}
By \eqref{eq:second-epsilon-bounds}, \eqref{eq:second-delta-formula} and \eqref{eq:second-beta-formula},
\begin{align}
 \delta(t)&=(n-1)t^{-11/16}+O_n\brac{t^{-43/16}},
 &
 \partial_t\delta(t)&=-\frac{11(n-1)}{16}t^{-27/16}+O_n\brac{t^{-59/16}},
 \label{eq:second-delta-expansion}\\
 \beta(t)&=\frac38t^{-5/16}+O_n\brac{t^{-27/16}},
 &
 \partial_t\beta(t)&=-\frac{15}{128}t^{-21/16}+O_n\brac{t^{-43/16}}.
 \label{eq:second-beta-expansion}
\end{align}
Substituting into \eqref{eq:second-alpha} gives
\begin{equation}\label{eq:second-alpha-expansion}
 \alpha(t)=-(n-1)t^{-5/16}+\frac{15}{128}t^{-15/16}+O_n\brac{t^{-37/16}},
 \qquad
 \partial_t\alpha(t)=\frac{5(n-1)}{16}t^{-21/16}+O_n\brac{t^{-31/16}}.
\end{equation}
Consequently, \eqref{eq:second-gamma} gives
\begin{equation}\label{eq:second-gamma-expansion}
 \gamma(t)=\frac38t^{-11/16}-\frac{5(n-1)}{16}t^{-21/16}+O_n\brac{t^{-31/16}},
\end{equation}
and, by \eqref{eq:second-kappa},
\begin{equation}\label{eq:second-kappa-expansion}
 \kappa(t)=t^{-3/8}+(n-1)^2t^{-1}+O_n\brac{t^{-13/8}}.
\end{equation}
In particular,
\begin{equation}\label{eq:second-coefficient-bounds}
 \abs{\alpha(t)}+\abs{\beta(t)}\leq C_nt^{-5/16},
 \qquad
 \abs{\gamma(t)}+\abs{\delta(t)}\leq C_nt^{-11/16},
 \qquad
 \abs{\kappa(t)}\leq C_nt^{-3/8},
 \qquad
 t\geq1.
\end{equation}

By \eqref{eq:general-Omega-size} and \eqref{eq:second-coefficient-bounds},
\begin{equation}\label{eq:second-Omega-upper}
 \abs{\Omega(x)}\leq\frac{C_n}{r\,t^{5/16}},
 \qquad
 x\in\B^n\setminus\{0\}.
\end{equation}
Since $n\geq4$, $\frac{5n}{16}>1$, so \Cref{lem:Lorentz-profile} with $a=5/16$, $\sigma=n$ gives
\[
 \Omega\in L^n(\B^n).
\]
Exactly as above in the proof of \Cref{thm:Ln-counterexample} we can now argue that \eqref{eq:system} is satisfied in distributional sense and conclude.

\end{proof}

\bibliographystyle{abbrv}
\bibliography{bib}

\end{document}